\newtheorem{thm}{Theorem}[section]
\newtheorem{cor}[thm]{Corollary}
\newtheorem{lem}[thm]{Lemma}
\newtheorem{exam}[thm]{Example}
\numberwithin{equation}{section}
\begin{document}

\title{Generalized Cline's formula for g-Drazin inverse}

\author{Huanyin Chen}
\author{Marjan Sheibani}
\address{
Department of Mathematics\\ Hangzhou Normal University\\ Hang -zhou, China}
\email{<huanyinchen@aliyun.com>}
\address{Women's University of Semnan (Farzanegan), Semnan, Iran}
\email{<sheibani@fgusem.ac.ir>}

\subjclass[2010]{15A09, 47A11, 47A53, 16U99.} \keywords{Cline's formula; Drazin inverse; Generalized Drazin inverse; Matrix; Bounded operator; Common spectral properties.}

\begin{abstract}
Let $R$ be an associative ring with an identity and suppose that $a,b,c,d\in R$ satisfy $$bdb=bac, dbd=acd.$$ If $ac$ has generalized Drazin ( respectively, pseudo Drazin, Drazin) inverse, we prove that $bd$ has generalized Drazin (respectively, pseudo Drazin, Drazin) inverse. In particular, as applications, we obtain new common spectral properties of bounded linear operators over Banach spaces.\end{abstract}

\maketitle

\section{Introduction}

Let $R$ be an associative ring with an identity. The commutant of $a\in R$ is defined by $comm(a)=\{x\in
R~|~xa=ax\}$. The double commutant of $a\in R$ is defined by $comm^2(a)=\{x\in R~|~xy=yx~\mbox{for all}~y\in comm(a)\}$.

An element $a\in R$ has Drazin inverse in case there exists $b\in R$ such that $$b=bab, b\in comm^2(a), a-a^2b\in R^{nil}.$$ The preceding $b$ is unique if exists, we denote it by $a^{D}$. Let $a,b\in R$. Then $ab$ has Drazin inverse if and only if $ba$ has Drazin inverse and $(ba)^{D}=b((ab)^{D})^2a$. This was known as Cline's formula for Drazin inverse. Cline's formula plays an elementary role in matrix and operator theory (see~\cite{Ba,Be,KT,L,LC,Mi,Y,YF,Z,Z1,Z2}).

An element $a\in R$ has g-Drazin inverse (i.e., generalized Drazin inverse) in case there exists $b\in R$ such that $$b=bab, b\in comm^2(a), a-a^2b\in R^{qnil}.$$ The preceding $b$ is unique if exists, we denote it by $a^{d}$. Here, $R^{qnil}=\{a\in R~|~1+ax\in R^{inv}~\mbox{for
every}~x\in comm(a)\}$. For a Banach algebra $\mathcal{A}$ it is well known
 that $$a\in \mathcal{A}^{qnil}\Leftrightarrow
\lim\limits_{n\to\infty}\parallel a^n\parallel^{\frac{1}{n}}=0\Leftrightarrow \lambda-a\in \mathcal{A}^{inv}~\mbox{for any scarlar}~\lambda\neq 0.$$ Many papers discussed Cline's formula for g-Drazin inverse in the setting of matrices, operators, elements of Banach algebras or rings. For any $a,b\in R$, Liao et al. proved that $ab$ has g-Drazin inverse if and only if $ba$ has g-Drazin inverse and $(ba)^{d}=b((ab)^{d})^2a$. This was known as Cline's formula for g-Drazin inverse (see~\cite[Theorem 2.1]{LC}). In ~\cite[Theorem 2.3]{L}, Lian and Zeng extended Cline's formula for a generalized Drazin inverse to the case when $aba=aca$. As a further extension, Miller and Zguitti generalized Cline's formula for g-Drazin inverse under the condition
$$dba=aca, dbd=acd~(\mbox{see~\cite[Theorem 3.2]{Mi}}).$$ This was independently proved by Zeng et al. (see~\cite[Theorem 2.7]{Z2}).

For any $a,b\in R$, Jacobson's Lemma for g-Drazin inverse states that $1+ab\in R^d$ if and only if $1+ba\in R^d$ (see~\cite[Theorem 2.3]{Z3}). Corach et al. \cite{C} generalized Jacobson's Lemma to the case that $aba=aca$. Recently, Yan et al. extended Jacobson's Lemma to the case $dba=aca, dbd=acd~(\mbox{see~\cite[Theorem 3.1]{YZ}}).$ In ~\cite{M}, Mosic studied the generalization of Jacobson's Lemma for g-Drazin inverses under a new condition
$$bdb=bac, dbd=acd~(\mbox{see~\cite[Theorem 2.5]{M}}).$$
The motivation of this paper is to investigate whether Cline's formula holds under the preceding Mosic's condition.
In Section 2, we proved that if $ac$ has g-Drazin inverse then $bd$ has g-Drazin inverse under the condition $bdb=bac, dbd=acd$.

Recall that an element $a$ in a ring $R$ has pseudo Drazin (i.e., p-Drazin inverse) provided that there exists $b\in R$ such that $$b=bab, b\in comm^2(a),
a^k-a^{k+1}b \in R^{rad}$$ for some $k\in {\Bbb N}$. The preceding $b$ is unique if exists, we denote it by $a^{\dag}$. The smallest $k$ for which the preceding holds is called the p-Drazin index of $a$ and denote it by $i(a)$. The p-Drazin inverse is an intermediary between the Drazin and generalized Drazin inverses. In Section 3, as consequences of our main result, we investigate the corresponding Cline's formula for p-Drazin and Drazin inverses.

In Section 4, as applications of our main result, we determine the common spectral properties of bounded linear operators over Banach spaces. Let $A,B,C,D\in \mathcal{L}(X)$ such that $BDB=BAC,$ $DBD=ACD$. We prove that $\sigma_d(AC)=\sigma_d(BD),$ where $\sigma_d$ is the g-Drazin spectrum.

Throughout the paper, all rings are associative with an identity. We use $R^{inv}, R^{nil}$ and $R^{rad}$ to denote the set
of all units, the set of all nilpotents and the Jacobson radical of the ring $R$, respectively. $R^{D}$ and $R^{d}$ denote the sets of all Drazin and g-Drazin invertible elements in $R$. ${\Bbb C}$ stands for the field of all complex numbers.

\section{generalized Cline's Formula}

For any elements $a,b$ in a ring $R$, it is well known that $ab\in R^{qnil}$ if and only if $ba\in R^{qnil}$ (see~\cite[Lemma 2.2]{L}). We now generalized this fact as follows.

\begin{lem} Let $R$ be a ring, and let $a,b,c,d\in R$ satisfying $bdb=bac, dbd=acd$. If $ac\in R^{qnil}$, then $bd\in R^{qnil}$.\end{lem}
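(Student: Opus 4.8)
The plan is to verify the commutant definition of $R^{qnil}$ directly for $bd$: I will show that $1+(bd)x\in R^{inv}$ for every $x\in comm(bd)$, which (replacing $x$ by $-x$, still in $comm(bd)$) is the same as showing $1-(bd)x\in R^{inv}$ for every such $x$. The guiding observation is that the hypotheses $bdb=bac$ and $dbd=acd$ are precisely the intertwining relations $b(ac)=(bd)b$ and $(ac)d=d(bd)$, so the natural strategy is to transport invertibility statements between the ``$ac$-side'' and the ``$bd$-side''.

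The crucial first step is a conjugation fact: for any $x\in comm(bd)$ the element $dxb$ lies in $comm(ac)$. To establish it I would compute $(ac)(dxb)=(acd)xb=(dbd)xb=d(bdx)b$ and $(dxb)(ac)=dx(bac)=dx(bdb)=d(xbd)b$, and then invoke $bdx=xbd$, which holds exactly because $x$ commutes with $bd$, to identify the two expressions. With $dxb\in comm(ac)$ in hand, the quasinilpotency of $ac$ applies at once: the definition of $R^{qnil}$ gives $1-(ac)(dxb)\in R^{inv}$.

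Next I would transport this invertibility across using the classical Jacobson lemma for units, $1-uv\in R^{inv}\iff 1-vu\in R^{inv}$. Writing $(ac)(dxb)=(acd)xb=(dbd)xb=d(bdxb)$ via the relation $acd=dbd$, Jacobson's lemma with $u=d$ and $v=bdxb$ yields $1-(bdxb)d\in R^{inv}$; and since $x$ commutes with $bd$, one has $(bdxb)d=bdxbd=(bd)^2x$. Hence $1-(bd)^2x\in R^{inv}$ for every $x\in comm(bd)$.

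Finally I would descend from the square back to $bd$ itself. For $x\in comm(bd)$ the identity $(1-(bd)x)(1+(bd)x)=1-((bd)x)^2=1-(bd)^2x^2$ holds, and symmetrically for the reversed product; since $x^2\in comm(bd)$, the previous step gives $1-(bd)^2x^2\in R^{inv}$, so $1-(bd)x$ has both a left and a right inverse and is therefore a unit. This proves $1-(bd)x\in R^{inv}$ for all $x\in comm(bd)$, that is, $bd\in R^{qnil}$. I expect the main obstacle to be the conjugation fact of the first step, namely realizing that the commutant condition for $bd$ must be tested through the element $dxb$; a secondary subtlety is that the relations only deliver the squared statement $1-(bd)^2x\in R^{inv}$, which is what forces the concluding descent from squares.
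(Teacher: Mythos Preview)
Your proof is correct and follows essentially the same strategy as the paper's: exhibit an element built from $x\in comm(bd)$ that lies in $comm(ac)$, invoke $ac\in R^{qnil}$ to obtain a unit, push it across with Jacobson's lemma, and then factor a polynomial in $(bd)x$ to conclude that $1-(bd)x$ is invertible. The only cosmetic difference is that the paper uses the auxiliary element $dx^3bac\in comm(ac)$, arrives at $1-x^3(bd)^3\in R^{inv}$, and factors cubically via $(1-xbd)(1+xbd+x^2(bd)^2)$, whereas you use the simpler element $dxb\in comm(ac)$, reach $1-(bd)^2x^2\in R^{inv}$, and factor quadratically---your version is slightly cleaner.
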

\begin{proof} Let $x\in comm(bd)$. Then we have
$$\begin{array}{lll}
(dx^3bac)ac&=&(dx^3bdb)ac\\
&=&(dbdx^3)(bac)\\
&=&(acdx^3)(bac)\\
&=&ac(dx^3bac).
\end{array}$$
Hence $dx^3bac\in comm(ac)$, and so $1-(dx^3bac)ac\in R^{inv}$. By using Jacobson's Lemma, $1-x^3bdbdbd=1-x^3bacacd\in R^{inv}$. Then
$$\begin{array}{lll}
(1-xbd)(1+xbd+x^2bdbd)&=&(1+xbd+x^2bdbd)(1-xbd)\\
&=&1-x^3bdbdbd\\
&\in& R^{inv},
\end{array}$$ therefore $1-xbd\in R^{inv}$. This shows that $bd\in R^{qnil}.$\end{proof}

We come now to the main result of this paper.

\begin{thm} Let $R$ be a ring, and let $a,b,c,d\in R$ satisfying $bdb=bac, dbd=acd$. If $ac\in R^{d}$, then $bd\in R^{d}$ and
$(bd)^{d}=b((ac)^{d})^2d$.\end{thm}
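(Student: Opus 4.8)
The plan is to write $e=ac$, $f=bd$ and $g=(ac)^{d}=e^{d}$, and to prove that the proposed inverse $y:=bg^{2}d=b\big((ac)^{d}\big)^{2}d$ satisfies the three defining conditions for $f^{d}$. First I would translate the hypotheses $bdb=bac$, $dbd=acd$ into the intertwining relations $fb=be$ and $df=ed$, and extract their elementary consequences: writing $p:=db$ one has $bp=be$, $pd=ed$, $p^{2}=pe$, and, most importantly, $p$ commutes with $e$ (since $ep=(ed)b=(df)b=d(fb)=d(be)=pe$). Because $g\in comm^{2}(e)$, commuting with $e$ forces commuting with $g$, so $pg=gp$. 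I would also record the transfer formula $f^{n+1}=be^{n}d$ (immediate by induction from $fb=be$) together with the standard g-Drazin identities $eg=ge$, $eg^{2}=g^{2}e=g$ and $g^{3}e=eg^{3}=g^{2}$.

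Granting these, the identity $y=yfy$ is a direct computation: $yfy=bg^{2}p^{2}g^{2}d$, and substituting $p^{2}=pe$, commuting $p$ past $g$, and using $bp=be$ with $eg^{3}=g^{2}$ collapses this to $bg^{2}d=y$. For the quasinilpotency of $f-f^{2}y$ I would first show $f^{2}y=b(eg)d$, so that $f-f^{2}y=b(1-eg)d$, and then --- rather than estimate by hand --- feed this into the preceding lemma. Setting $b'=b$, $d'=(1-eg)d$, $a'=e$, $c'=1-eg$ (so that $b'd'=b(1-eg)d=f-f^{2}y$), the idempotency of $1-eg$, its commuting with $e$ and $p$, and the relations $bp=be$, $pd=ed$ let me verify $b'd'b'=bw=b'a'c'$ and $d'b'd'=wd=a'c'd'$, where $w:=e-e^{2}g=a'c'$. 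Since $w=e-e^{2}g\in R^{qnil}$, the lemma yields $f-f^{2}y=b'd'\in R^{qnil}$.

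The step I expect to be the main obstacle is the double commutant condition $y\in comm^{2}(f)$, because it must hold for an \emph{arbitrary} $x\in comm(f)$ with no direct relation to $e$ or $g$ available. The device I would use is to form $z:=dxb$ and first prove $z\in comm(e)$: from $xf=fx$ one computes $ez=(ed)xb=(df)xb=d(fx)b=d(xf)b=dx(fb)=dx(be)=ze$; consequently $zg=gz$, again because $g\in comm^{2}(e)$. I would also note $zd=dx\,f=d(fx)=(df)x=edx$. Now I transport $x$ across $y$ in stages: right-multiplying $xbd=bdx$ by $b$ and using $bdb=be$ gives $xbe=bz$; multiplying by $g^{2}$ and invoking $eg^{2}=g$ gives $xbg=bzg^{2}$, and a further $g$ gives $xbg^{2}=bzg^{3}$; finally $xbg^{2}d=bzg^{3}d=bg^{3}(zd)=b(g^{3}e)dx=bg^{2}dx$, using $zg^{3}=g^{3}z$, $zd=edx$ and $g^{3}e=g^{2}$. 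This is exactly $xy=yx$, so $y\in comm^{2}(f)$. Assembling the three verified conditions shows $bd\in R^{d}$ with $(bd)^{d}=bg^{2}d=b\big((ac)^{d}\big)^{2}d$. The recurring subtlety to respect throughout is that $e=ac$ is \emph{not} invertible, so every cancellation of $e$ against $g$ must be routed through the legitimate identities $eg^{2}=g$ and $g^{3}e=g^{2}$ rather than any fictitious inverse of $e$.
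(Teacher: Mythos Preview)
Your proof is correct and follows essentially the same architecture as the paper's: set the candidate $y=b((ac)^d)^2d$, verify $y\in comm^2(bd)$, $y(bd)y=y$, and $bd-(bd)^2y\in R^{qnil}$, the last by rewriting $bd-(bd)^2y=b(1-eg)d$ and invoking Lemma~2.1 with $a'c'=e-e^2g\in R^{qnil}$. The one notable streamlining in your version is the observation that $p=db$ itself commutes with $e=ac$ (since $(db)(ac)=d(bdb)=(dbd)b=(ac)(db)$, both equalling $(db)^2$), which immediately gives $pg=gp$; the paper instead works via the auxiliary fact that $dtbac\in comm(ac)$ for $t\in comm(bd)$ and extracts only $(db)^3h=h(db)^3$. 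Your packaging through the intertwining relations $fb=be$, $df=ed$ makes the computations for $y(bd)y=y$ and the double commutant shorter, but the underlying algebra and the use of Lemma~2.1 are the same as in the paper.
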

\begin{proof} Suppose that $ac$ has g-Drazin inverse and $(ac)^{d}=h$. Let $e=bh^2d$ and $t\in comm(bd)$. Then $$ac(dtbac)=d(bd)tbac=dt(bdb)(ac)=dt(bac)(ac)=(dtbac)ac.$$
Thus $dtbac\in comm(ac)$, and so $(dtbac)h=h(dtbac).$ Therefore we have
 $$\begin{array}{lll}
 et&=&bh^4(ac)(ac)dt\\
 &=&bh^4d(bd)^2t\\
 &=&bh^4dt(bdb)d\\
 &=&bh^3(hdtbac)d\\
 &=&b(dtbac)h^4d\\
 &=&tb(ac)^2h^4d\\
 &=&te
 \end{array}$$
 Hence $e\in comm^2(bd)$.

Since $bd\in comm(bd)$, by the preceding discussion, we have $dbdbac\in comm(ac)$, and then $$(db)^3(ac)=dbdbac(ac)=(ac)dbdbac=(db)^3(ac).$$
This implies that $(db)^3h=h(db)^3$, and so
 $$\begin{array}{lll}
 e(bd)e&=&bh^2d(bd)bh^2d=b(h^3ac)dbdbh^2d=bh^3(db)^3h^2d\\
 &=&bh^5dbdbdbd=bh^5(ac)^3d=bh^2d=e.
 \end{array}$$

 Let $p=1-(ac)h$. Then $(pa)c=ac-achac=ac-(ac)^2h\in R^{qnil}$.
 Moreover, we have
  $$\begin{array}{lll}
  bd-(bd)^2e&=&bd-bdbdbh^2d=bd-bdbdbh^2d\\
  &=&bd-bacach^2d=b(1-ach)d=b(pd).
  \end{array}$$
 One easily checks that
  $$\begin{array}{lll}
  (pd)b(pd)&=&(1-ac)db(1-ac)d\\
  &=&(1-ac)(dbd-dbacd)\\
  &=&(1-ac)(acd-dbdbd)\\
  &=&(1-ac)(acd-acacd)\\
  &=&(pa)c(pd).
  \end{array}$$ Moreover, we check that
  $$\begin{array}{ll}
  &b(pd)b\\
  =&b(1-ach)db=bdb-bachdb=bac-bh^3(db)^3ac\\
  =&bac-b(db)^3h^3ac=bac-(bdb)dbdbh^3ac=bac-b(ac)^3h^3ac\\
  =&bac-bhacac=b(1-ach)ac=b(pa)c.
  \end{array}$$
  Then by Lemma 2.1, $b(pd)\in R^{qnil}$. Hence $bd$ has g-Drazin inverse $e$. That is,
  $e=bh^2a=(bd)^{d},$ as desired.\end{proof}

In the case that $c=b$ and $d=a$, we recover the Cline's formula for g-Drazin inverse.

 \begin{cor} Let $R$ be a ring, and let $a,b\in R$. If $ab\in R^{d}$, then $ba\in R^{d}$ and $(ba)^{d}=b((ab)^{d})^2a$.\end{cor}

The following examples show that the preceding theorem is independent from~\cite[Theorem 3.2]{Mi} and ~\cite[Theorem 2.7]{Z2}.

\begin{exam}\end{exam} Let $R=M_2({\Bbb C})$. Choose $$\begin{array}{c}
a=
\left(
\begin{array}{cc}
0&1\\
0&0
\end{array}
\right), b=\left(
\begin{array}{cc}
1&0\\
0&0
\end{array}
\right),\\
c=\left(
\begin{array}{cc}
1&0\\
1&1
\end{array}
\right), d=\left(
\begin{array}{cc}
1&1\\
0&0
\end{array}
\right)\in R.
\end{array}$$ Then we check that
$$bdb=\left(
\begin{array}{cc}
1&0\\
0&0
\end{array}
\right)\neq \left(
\begin{array}{cc}
1&1\\
0&0
\end{array}
\right)
=bac, dbd=\left(
\begin{array}{cc}
1&1\\
0&0
\end{array}
\right)=acd,$$ but $$dba=\left(
\begin{array}{cc}
0&1\\
0&0
\end{array}
\right)=aca, dbd=acd.$$ In this case, $ac\in R^{D}$.

\begin{exam}\end{exam} Let $R=M_2({\Bbb C})$. Choose $$\begin{array}{c}
a=
\left(
\begin{array}{cc}
0&1\\
0&0
\end{array}
\right), b=\left(
\begin{array}{cc}
0&0\\
0&1
\end{array}
\right),\\
c=\left(
\begin{array}{cc}
1&0\\
1&1
\end{array}
\right), d=\left(
\begin{array}{cc}
1&0\\
-1&0
\end{array}
\right)\in R.
\end{array}$$ Then we check that
$$bdb=0=bac, dbd=0=acd,$$ but $$dba=0\neq \left(
\begin{array}{cc}
0&1\\
0&0
\end{array}
\right)=aca, dbd=acd.$$ In this case, $ac\in R^{D}$.

\section{p-Drazin inverses}

In this section, we investigate the Cline's formula for the p-Drazin inverse. The following lemma is crucial.

\begin{lem} Let $R$ be a ring, and let $a\in R$. If $a\in R^{\dag}$, then $a\in R^{d}$ and $a^{\dag}=a^d$.\end{lem}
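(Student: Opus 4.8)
The plan is to show that the very element $b=a^{\dag}$ witnessing the p-Drazin inverse is already a g-Drazin inverse of $a$; uniqueness of the g-Drazin inverse then forces $a^{\dag}=a^{d}$. Writing out the two definitions side by side, the requirements $b=bab$ and $b\in comm^2(a)$ are common to both notions, so the whole problem reduces to upgrading the radical condition $a^{k}-a^{k+1}b\in R^{rad}$ (for some $k$) to the quasinilpotency condition $a-a^{2}b\in R^{qnil}$, for this \emph{same} $b$.

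First I would exploit the commutation. Since $a\in comm(a)$ and $b\in comm^2(a)$, we get $ab=ba$; combined with $b=bab$ this gives $(ab)^{2}=a(bab)=ab$, so $e:=ab=ba$ is an idempotent commuting with $a$. Consequently $a-a^{2}b=a(1-e)$, and because $1-e$ is an idempotent commuting with $a$, we may raise this to the $k$-th power term by term: $(a-a^{2}b)^{k}=a^{k}(1-e)=a^{k}-a^{k+1}b\in R^{rad}$. Thus a single power of $w:=a-a^{2}b$ lands in the Jacobson radical.

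The main step, and the one I expect to be the real obstacle, is to pass from $w^{k}\in R^{rad}$ back to $w\in R^{qnil}$. I would argue directly from the definition of $R^{qnil}$: fix $x\in comm(w)$ and put $u=wx=xw$. Since the Jacobson radical is a two-sided ideal and $u^{k}=w^{k}x^{k}$ with $w^{k}\in R^{rad}$, we have $u^{k}\in R^{rad}$, hence $1-u^{k}\in R^{inv}$. Now factor $1-u^{k}=(1-u)(1+u+\cdots+u^{k-1})=(1+u+\cdots+u^{k-1})(1-u)$; the two factors commute and their product is a unit, so $1-u$ is itself a unit. As $x\in comm(w)$ was arbitrary, this yields $1-wx\in R^{inv}$ for every such $x$; replacing $x$ by $-x$ gives $1+wx\in R^{inv}$ as well, i.e. $w=a-a^{2}b\in R^{qnil}$.

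Putting these together, $b$ satisfies $b=bab$, $b\in comm^2(a)$ and $a-a^{2}b\in R^{qnil}$, so $a\in R^{d}$ with $a^{d}=b=a^{\dag}$. The only delicate points are the ideal-plus-factorization argument transferring quasinilpotency from $w^{k}$ to $w$, and the clean bookkeeping that the same $b$ serves both inverses; the containment $R^{rad}\subseteq R^{qnil}$ that this encodes, together with the idempotent computation, is really all that drives the proof.
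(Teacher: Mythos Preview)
Your proof is correct and follows essentially the same route as the paper's: compute $(a-a^{2}b)^{k}=a^{k}(1-ab)=a^{k}-a^{k+1}b\in R^{rad}$ using that $ab$ is an idempotent commuting with $a$, and then conclude $a-a^{2}b\in R^{qnil}$. The only difference is that the paper asserts the implication ``$w^{k}\in R^{rad}\Rightarrow w\in R^{qnil}$'' in one line, whereas you spell it out via the factorization $1-u^{k}=(1-u)(1+u+\cdots+u^{k-1})$; this extra detail is sound and arguably clarifies the point the paper leaves implicit.
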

\begin{proof} By hypothesis, there exists $b\in comm^2(a)$ such that $b=b^2a, a^k-a^{k+1}b
 \in R^{rad}$ for some $k\in {\Bbb N}$. Hence, $(a-a^2b)^k=a^k(1-ab)^k=a^k(1-ab)=a^k-a^{k+1}b\in R^{rad}$. Thus, $a-a^2b\in R^{qnil}$. Therefore $a$ has
 g-Drazin inverse $b$. In light of~\cite[Lemma 2.4]{K}, $b$ is unique, and so $b=a^{\dag}$, as required.\end{proof}

\begin{thm} Let $R$ be a ring, and let $a,b,c,d\in R$ satisfying $bdb=bac, dbd=acd$. If $ac\in R^{\dag}$,
$bd\in R^{\dag}$, then $(bd)^{\dag}=b((ac)^{\dag})^2d$ and $i(bd)\leq i(ac)+1$.
\end{thm}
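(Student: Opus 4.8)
The plan is to recognise the g-Drazin inverse supplied by Theorem 2.2 as a p-Drazin inverse, and then to bound its index by a single radical computation. Write $h=(ac)^{\dag}$ and $k=i(ac)$, so that by definition $(ac)^{k}-(ac)^{k+1}h\in R^{rad}$. Lemma 3.1 gives $ac\in R^{d}$ with $h=(ac)^{d}$, whence Theorem 2.2 produces $bd\in R^{d}$ and $(bd)^{d}=bh^{2}d=:e$. From the proof of Theorem 2.2 I already have the two algebraic conditions $e\in comm^{2}(bd)$ and $e(bd)e=e$; these are precisely two of the three axioms for $e$ to be the p-Drazin inverse of $bd$. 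Hence it suffices to exhibit an index $m$ with $(bd)^{m}-(bd)^{m+1}e\in R^{rad}$, and proving this for $m=k+1$ will at once yield $bd\in R^{\dag}$, the formula $(bd)^{\dag}=e=b((ac)^{\dag})^{2}d$, and the estimate $i(bd)\le k+1$.

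The computation rests on two power identities driven by the hypotheses $bdb=bac$ and $dbd=acd$. A one-step induction, in which the inductive step rewrites the central block $dbd$ as $acd$, gives $(bd)^{n+1}=b(ac)^{n}d$ for every $n\ge 1$. For the second identity I start from $e(bd)=bh^{2}(dbd)=bh^{2}(ac)d=bhd$, where the last equality uses $h^{2}(ac)=h$ (a consequence of $h(ac)h=h$ together with $h(ac)=(ac)h$); feeding $dbd=acd$ into the recursion then gives $e(bd)^{m}=bh(ac)^{m-1}d$ for all $m\ge 1$. Since $e\in comm^{2}(bd)\subseteq comm(bd)$, this reads equally as $(bd)^{m}e=bh(ac)^{m-1}d=b(ac)^{m-1}hd$, the final rewriting using again that $h$ commutes with $ac$.

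With both identities in hand the index bound is immediate. Set $r=(ac)^{k}-(ac)^{k+1}h\in R^{rad}$ and expand $brd$ using the two identities with $n=k$ and $m=k+2$:
$$brd=b(ac)^{k}d-b(ac)^{k+1}hd=(bd)^{k+1}-(bd)^{k+2}e.$$
As $R^{rad}$ is a two-sided ideal, $brd\in R^{rad}$, so $(bd)^{k+1}-(bd)^{k+2}e\in R^{rad}$. Combined with $e\in comm^{2}(bd)$ and $e(bd)e=e$ this certifies $e$ as the p-Drazin inverse of $bd$ with index at most $k+1$, giving $(bd)^{\dag}=b((ac)^{\dag})^{2}d$ and $i(bd)\le i(ac)+1$.

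The one step I would write out most carefully is the second power identity, since the direct expansion $(bd)^{m}e=b(ac)^{m-1}d\cdot bh^{2}d$ stalls on the factor $db$, for which the hypotheses provide no reduction. Building the identity up from the left through $e(bd)$, so that each multiplication by $bd$ absorbs a copy of $dbd=acd$, is exactly what avoids this obstruction; the commutativity $h(ac)=(ac)h$ is then the ingredient that aligns $bh(ac)^{m-1}d$ with the term $b(ac)^{k+1}hd$ appearing in the expansion of $brd$.
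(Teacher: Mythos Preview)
Your argument is correct and follows essentially the same route as the paper: invoke Lemma~3.1 and Theorem~2.2 to obtain $(bd)^d=bh^2d$, then verify the identity $(bd)^{k+1}-(bd)^{k+2}e=b\big((ac)^k-(ac)^{k+1}h\big)d$ and use that $R^{rad}$ is an ideal. The paper states this identity without derivation (``One easily checks that\ldots''), whereas you supply the supporting power identities $(bd)^{n+1}=b(ac)^n d$ and $e(bd)^m=bh(ac)^{m-1}d$ explicitly; otherwise the proofs coincide.
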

\begin{proof} Suppose that $ac\in R^{\dag}$. By virtue of Lemma 3.1, $(ac)^d=(ac)^{\dag}$. In view of Theorem 2.2, $bd\in R^{d}$ and $(bd)^{d}=b((ac)^{\dag})^2d$. One easily checks that $$(bd)^{k+2}(bd)^{\dag}-(bd)^{k+1}=b\big((ac)^{k+1}(ac)^d-(ac)^k\big))$$ for all $k\in {\Bbb N}$. Therefore $(bd)^m-(bd)^{m+1}(bd)^{\dag}\in R^{rad}$ for some $m\in {\Bbb N}$, and so $bd\in R^{\dag}$. Moreover, $(bd)^{\dag}=(bd)^{d}=b((ac)^{\dag})^2d$, as desired.\end{proof}

\begin{cor} Let $R$ be a ring, and let $a,b\in R$. If $ab\in R^{\dag}$, then $ba\in R^{\dag}$ and $(ba)^{\dag}=b((ab)^{\dag})^2a$.\end{cor}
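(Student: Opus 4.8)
The plan is to obtain this statement as the special case of Theorem 3.2 in which $c=b$ and $d=a$. First I would verify that under this substitution the two defining relations $bdb=bac$ and $dbd=acd$ hold automatically. With $c=b$ and $d=a$ we have $bdb=bab$ and $bac=bab$, so the first relation collapses to the identity $bab=bab$; similarly $dbd=aba$ and $acd=aba$, so the second relation is again an identity. Thus the hypotheses of Theorem 3.2 are satisfied for free, with no condition imposed on $a,b$ beyond being elements of $R$.

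With these same choices one also has $ac=ab$ and $bd=ba$. Hence the assumption $ab\in R^{\dag}$ is precisely the hypothesis $ac\in R^{\dag}$ of Theorem 3.2, and the theorem applies directly. It yields $ba=bd\in R^{\dag}$ together with the formula
$$(ba)^{\dag}=(bd)^{\dag}=b\big((ac)^{\dag}\big)^2 d=b\big((ab)^{\dag}\big)^2 a,$$
which is exactly the desired conclusion.

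There is no real obstacle here: the entire content is the observation that the specialization $c=b$, $d=a$ turns the twisted conditions of Theorem 3.2 into trivial identities, so the corollary follows immediately by quoting that theorem. One may note in passing that the index estimate $i(ba)\le i(ab)+1$ also drops out of the same specialization, although it is not part of the stated corollary.
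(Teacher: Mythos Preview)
Your argument is correct and is exactly the paper's own proof: it simply specializes Theorem 3.2 with $c=b$ and $d=a$, noting that the relations $bdb=bac$ and $dbd=acd$ become identities and that $ac=ab$, $bd=ba$.
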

\begin{proof} This is obvious by choosing $c=b$ and $d=a$ in Theorem 3.2.\end{proof}

\begin{thm} Let $R$ be a ring, and let $a,b,c,d\in R$ satisfying $bdb=bac, dbd=acd$. If $ac\in R^{D}$, then $(bd)^{D}=b((ac)^{D})^2d$ and $i(bd)\leq i(ac)+1$.\end{thm}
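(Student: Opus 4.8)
The plan is to mirror the proof of Theorem 3.2, replacing the ``lies in the Jacobson radical'' step by an explicit nilpotency count so that the Drazin index can be tracked. First I would record the inclusion $R^D\subseteq R^d$: if $ac\in R^D$ then $ac-(ac)^2(ac)^D\in R^{nil}\subseteq R^{qnil}$, so $ac\in R^d$, and by uniqueness of the g-Drazin inverse $(ac)^d=(ac)^D=:h$. Theorem 2.2 then immediately yields $bd\in R^d$ with $(bd)^d=bh^2d=:e$, together with the defining g-Drazin data $e\in comm^2(bd)$, $e=e(bd)e$ and $e(bd)=(bd)e$.

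Second, I would reuse the polynomial identity already established inside the proof of Theorem 3.2, namely $(bd)^{k+2}e-(bd)^{k+1}=b\big((ac)^{k+1}h-(ac)^k\big)d$ for every $k\in{\Bbb N}$; the same computation applies here since $(bd)^d=e$ and $(ac)^d=(ac)^D=h$ in the present setting. The algebraic engine behind this identity is the pair of facts $(bd)^n=b(ac)^{n-1}d$ (an easy induction from $bdb=bac$) and the commutation $ac\cdot db=db\cdot ac$, which follows from $acdb=(acd)b=(dbd)b=d(bdb)=d(bac)=dbac$. I would flag these two as the only nonroutine inputs.

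Third, I would convert the Drazin hypothesis into exact vanishing. Since $(ac)^k-(ac)^{k+1}(ac)^D=\big(ac-(ac)^2(ac)^D\big)^k$, this difference is $0$ for every $k\ge i(ac)$; substituting such $k$ into the identity of the previous paragraph gives $(bd)^{k+2}e=(bd)^{k+1}$ for all $k\ge i(ac)$. Finally, setting $q=(bd)e$, which is idempotent and commutes with $bd$, I would compute $\big(bd-(bd)^2e\big)^n=(bd)^n(1-q)^n=(bd)^n-(bd)^{n+1}e$, and observe that for $n\ge i(ac)+1$ (so that $n-1\ge i(ac)$) the right-hand side equals $(bd)^n-(bd)^n=0$. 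Hence $bd-(bd)^2e\in R^{nil}$ with nilpotency index at most $i(ac)+1$. Combined with the g-Drazin data from the first step, this is exactly the definition of a Drazin inverse, so $bd\in R^D$, and by uniqueness $(bd)^D=e=b((ac)^D)^2d$ with $i(bd)\le i(ac)+1$.

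As for difficulty, I anticipate no genuine obstacle: the whole argument is a refinement of Theorem 3.2 in which ``belongs to the radical'' is upgraded to ``vanishes.'' The single point requiring care is the index bookkeeping, namely verifying that the exact vanishing $(ac)^k-(ac)^{k+1}(ac)^D=0$ first occurs at $k=i(ac)$ and that the identity $\big(bd-(bd)^2e\big)^n=(bd)^n-(bd)^{n+1}e$ transfers this to nilpotency precisely at exponent $i(ac)+1$, thereby delivering the sharp bound $i(bd)\le i(ac)+1$.
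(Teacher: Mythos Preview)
Your proof is correct and follows essentially the same line as the paper's: both arguments rest on the identity $(bd)^{k+2}e-(bd)^{k+1}=b\big((ac)^{k+1}h-(ac)^k\big)d$ established in the proof of Theorem~3.2, and then upgrade the radical/quasinilpotent condition to exact vanishing using the Drazin hypothesis to read off the index bound. The only cosmetic difference is that the paper routes through Theorem~3.2 (the p-Drazin case) while you go straight from Theorem~2.2 and do the nilpotency bookkeeping $(bd-(bd)^2e)^n=(bd)^n-(bd)^{n+1}e$ explicitly, which is if anything a little cleaner.
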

\begin{proof} Suppose that $ac\in R^{D}$. Then $ac\in R^{dag}$. In view of Theorem 3.2, $bd\in R^{\dag}$, $(bd)^{\dag}=b((ac)^{D})^2d$ and $i(bd)\leq i(ac)+1$. As in the proof of Theorem 3.2, $(bd)^{k+2}(bd)^{\dag}-(bd)^{k+1}=b\big((ac)^{k+1}(ac)^D-(ac)^k\big))$ for all $k\in {\Bbb N}$. Therefore $(bd)^m=(bd)^{m+1}(bd)^{\dag}$ for some $m\in {\Bbb N}$, and so $bd\in R^{D}$. Furthermore, $(bd)^{D}=(bd)^{\dag}=b((ac)^{D})^2d$, as required.\end{proof}

Recall that $a\in R$ has group inverse if $a$ has Drazin inverse with index $1$, and we denote the group inverse of $a$ by $a^{\#}$. We now derive

\begin{cor} Let $R$ be a ring, and let $a,b,c,d\in R$ satisfying $bdb=bac, dbd=acd$. If $ac$ has group inverse, then \end{cor}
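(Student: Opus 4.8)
The plan is to read the hypothesis ``$ac$ has group inverse'' as the assertion that $ac$ is Drazin invertible of index one, so that $ac\in R^{D}$ with $(ac)^{D}=(ac)^{\#}$ and $i(ac)=1$. Under this reading the natural conclusion to establish is that $bd\in R^{D}$, that $(bd)^{D}=b((ac)^{\#})^2d$, and that $i(bd)\leq 2$. I would emphasize at the outset that one should not expect $bd$ to inherit a group inverse, but only a Drazin inverse of index at most two.

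First I would feed these data directly into Theorem 3.4. Since the quadruple $a,b,c,d$ satisfies $bdb=bac$ and $dbd=acd$, and since $ac\in R^{D}$, Theorem 3.4 yields at once that $bd\in R^{D}$ with $(bd)^{D}=b((ac)^{D})^2d$ and $i(bd)\leq i(ac)+1$. Substituting $(ac)^{D}=(ac)^{\#}$ and $i(ac)=1$ then converts these into the desired identity $(bd)^{D}=b((ac)^{\#})^2d$ and the index estimate $i(bd)\leq 2$.

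There is essentially no computational obstacle here: the corollary is a direct specialization of Theorem 3.4 to the index-one case, and all of the genuine work---constructing the candidate $e=b((ac)^{\#})^2d$, checking $e\in comm^2(bd)$, verifying $e(bd)e=e$, and showing $bd-(bd)^2e\in R^{qnil}$ via Lemma 2.1---has already been carried out in Theorems 2.2 and 3.4. The one point that deserves care is resisting the temptation to claim a group inverse for $bd$: the estimate $i(bd)\leq i(ac)+1$ degrades an index-one hypothesis to an index-two conclusion. I would expect this bound to be sharp, in the sense that a quadruple can be exhibited for which $ac$ has group inverse while $bd$ has Drazin index exactly two, so that the conclusion cannot in general be strengthened beyond $i(bd)\leq 2$.
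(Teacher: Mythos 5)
Your proposal is correct and follows exactly the paper's route: the corollary is read off from Theorem 3.4 by specializing to $i(ac)=1$, so that $i(bd)\leq 2$ and the trichotomy (invertible, group invertible, or Drazin invertible of index $2$) with the formula $b((ac)^{\#})^2d$ follows immediately. Your expectation that the index bound is sharp is also borne out by the paper's Example 3.6, where $ac=0$ has a group inverse but $bd$ is a nonzero nilpotent and hence has no group inverse.
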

\begin{enumerate}
\item [(1)]{\it $bd\in R^{inv}$; or}
\vspace{-.5mm}
\item [(2)]{\it $bd$ has group inverse and $(bd)^{\#}=b((ac)^{\#})^2d$; or}
\vspace{-.5mm}
\item [(3)]{\it $bd\in R^{d}$ and $(bd)^{d}=b((ac)^{\#})^2d$.}
\end{enumerate}
\begin{proof} Since $ac$ has group inverse, it follows by Theorem 3.4 that $i(bd)\leq 2$. Hence, $i(bd)=0, 1, 2$. This completes the proof.\end{proof}

\begin{exam}\end{exam} Let $R=M_2({\Bbb Z})$ be the ring of all $2\times 2$ integer matrices. Choose $$a=
\left(
\begin{array}{cc}
0&1\\
0&1
\end{array}
\right), b=
\left(
\begin{array}{cc}
1&1\\
0&0
\end{array}
\right), c=
\left(
\begin{array}{cc}
1&-1\\
0&0
\end{array}
\right), d=a.$$ Then $bdb=bac, dbd=acd$. In this case, $ac=0$ has group inverse, but $bd=\left(
\begin{array}{cc}
0&2\\
0&0
\end{array}
\right)$ has no group inverse in $R$.

\section{Applications}

Let $X$ be  Banach space, and let $\mathcal{L}(X)$ denote the set of all bounded linear operators from Banach space to itself, and let $a\in \mathcal{L}(X)$. The Drazin spectrum $\sigma_D(a)$ and g-Drazin spectrum $\sigma_{d}(a)$ are defined by $$\begin{array}{c}
\sigma_D(a)=\{ \lambda\in {\Bbb C}~|~\lambda-a\not\in A^{D}\};\\
\sigma_{d}(a)=\{ \lambda\in {\Bbb C}~|~\lambda-a\not\in A^{d}\}.
\end{array}$$ The goal of this section is concern on common spectrum properties of $\mathcal{L}(X)$. We now record the generalized Jacobson's Lemma as follows.

\begin{lem}~(\cite{M}) Let $R$ be a ring, and let $a,b,c,d\in R$ satisfying $bdb=bac, dbd=acd$. If $1-ac\in R^{inv}$, then $1-bd\in R^{inv}$ and $$(1-bd)^{-1}=1+b(1-ac)^{-1}d.$$
\end{lem}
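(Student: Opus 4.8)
The plan is to guess the inverse explicitly and verify it directly, exactly as the asserted formula dictates. Write $u=(1-ac)^{-1}$, so that $u$ satisfies the two one-sided relations $u-uac=1$ and $u-acu=1$; equivalently $uac=acu=u-1$. I would then take $v=1+bud$ as the candidate two-sided inverse of $1-bd$ and check the products $(1-bd)v$ and $v(1-bd)$ separately, since the hypotheses $bdb=bac$ and $dbd=acd$ are \emph{not} symmetric and each one-sided product will consume a different one of them.

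For the first product I expand
\[
(1-bd)(1+bud)=1+bud-bd-(bdb)ud,
\]
and the point is that the last term isolates the block $bdb$. Substituting $bdb=bac$ turns it into $b(acu)d=b(u-1)d=bud-bd$, and these two terms cancel the surviving $bud-bd$ exactly, leaving $1$.

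For the reverse product the bookkeeping is mirror-symmetric. Expanding
\[
(1+bud)(1-bd)=1-bd+bud-bu(dbd),
\]
the final term now isolates $dbd$; substituting $dbd=acd$ gives $b(uac)d=b(u-1)d=bud-bd$, which again cancels against $bud-bd$ to leave $1$. Since both one-sided products equal $1$, the element $1+b(1-ac)^{-1}d$ is a genuine two-sided inverse of $1-bd$, which is precisely the claim.

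I do not expect a deep obstacle here, as the result is a pure identity verification. The only point requiring care is the associativity bookkeeping—grouping the triple products so that exactly $bdb$ appears in the first computation and exactly $dbd$ in the second, so that the correct hypothesis can be applied—together with keeping the two one-sided relations $uac=u-1$ and $acu=u-1$ straight rather than assuming any stronger commutation of $u$ with $ac$.
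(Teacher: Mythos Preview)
Your verification is correct: writing $u=(1-ac)^{-1}$ and checking that $1+bud$ is a two-sided inverse of $1-bd$ by expanding both products and invoking $bdb=bac$ for one side and $dbd=acd$ for the other is exactly how this identity is established. The paper itself does not prove this lemma but merely quotes it from Mosi\'c~\cite{M}, so there is no in-paper argument to compare against; your direct computation is the standard route and nothing further is needed.
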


We now ready to prove the following.

\begin{thm} Let $A,B,C,D\in \mathcal{L}(X)$ such that $BDB=BAC,$ $DBD=ACD$, then $$\sigma_d(AC)=\sigma_d(BD).$$\end{thm}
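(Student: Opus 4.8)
The plan is to reduce the spectral identity to the pointwise equivalence $\lambda I-AC\in\mathcal{L}(X)^{d}\Leftrightarrow\lambda I-BD\in\mathcal{L}(X)^{d}$ for every $\lambda\in{\Bbb C}$, since $\sigma_d(AC)=\sigma_d(BD)$ is exactly the statement that these two sets of ``bad'' scalars coincide. I would handle $\lambda\neq 0$ and $\lambda=0$ separately, because only the former can be normalised to the form $1-(\cdot)$ on which the Jacobson-type results operate.

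For $\lambda\neq 0$ the first observation is that Mosic's relations are stable under rescaling. Putting $A'=\lambda^{-1}A$, $B'=\lambda^{-1}B$, $C'=C$, $D'=D$, one checks $B'D'B'=\lambda^{-2}BDB=\lambda^{-2}BAC=B'A'C'$ and $D'B'D'=\lambda^{-1}DBD=\lambda^{-1}ACD=A'C'D'$, so the quadruple $(A',B',C',D')$ again satisfies Mosic's condition, with $A'C'=\lambda^{-1}AC$ and $B'D'=\lambda^{-1}BD$. Since $\lambda\neq 0$, we have $\lambda I-AC=\lambda(I-A'C')$, which is g-Drazin invertible iff $I-A'C'$ is, and likewise for $BD$. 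Thus this case collapses to $I-A'C'\in\mathcal{L}(X)^{d}\Leftrightarrow I-B'D'\in\mathcal{L}(X)^{d}$, which is precisely the generalized Jacobson lemma for the g-Drazin inverse under Mosic's condition (\cite[Theorem 2.5]{M}). This yields $\sigma_d(AC)\setminus\{0\}=\sigma_d(BD)\setminus\{0\}$ in one stroke.

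For $\lambda=0$ I must show $AC\in\mathcal{L}(X)^{d}\Leftrightarrow BD\in\mathcal{L}(X)^{d}$. One implication is free: Theorem 2.2 gives $AC\in\mathcal{L}(X)^{d}\Rightarrow BD\in\mathcal{L}(X)^{d}$, i.e. $0\in\sigma_d(BD)\Rightarrow 0\in\sigma_d(AC)$, and combined with the previous paragraph this already delivers $\sigma_d(BD)\subseteq\sigma_d(AC)$. To shrink what is left I would use that $\sigma_d$ is closed: once $\sigma_d(AC)\setminus\{0\}=\sigma_d(BD)\setminus\{0\}=:S$ is known, if $0$ is an accumulation point of $S$ then closedness forces $0$ into both spectra, and equality follows; the only genuinely open sub-case is when $0$ is isolated from $S$, where one must decide membership of $0$ directly. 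There the missing implication is the reverse Cline statement $BD\in\mathcal{L}(X)^{d}\Rightarrow AC\in\mathcal{L}(X)^{d}$.

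This reverse implication is the main obstacle, and I expect it — not any of the bookkeeping — to be the crux of the whole theorem. The asymmetry of $BDB=BAC$, $DBD=ACD$ blocks any relabelling of $(A,B,C,D)$ that swaps the roles of $AC$ and $BD$: the obvious substitutions demand the unavailable identities $ACA=ABD$ and $CAC=BDC$, and passing to adjoints only reproduces the same forward direction $(AC)^{*}\to(BD)^{*}$. I would therefore try to prove a reverse Cline's formula directly, mirroring Theorem 2.2: set $g=(BD)^{d}$, take $f=Dg^{2}B$ as a candidate for $(AC)^{d}$, and verify the conditions using the intertwinings $(BD)B=B(AC)$ and $D(BD)=(AC)D$ read off from Mosic's hypotheses. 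The commutation and idempotent-type identities $f\in comm^{2}(AC)$ and $f(AC)f=f$ go through smoothly, so everything hinges on the quasinilpotency of $AC-(AC)^{2}f$. This is exactly where the difficulty concentrates, because the transfer lemma at hand (Lemma 2.1) pushes quasinilpotency from the $AC$-side to the $BD$-side and \emph{not} conversely; completing the argument would require a reverse of Lemma 2.1, and securing that reverse transfer is, in my view, the hard and decisive step.
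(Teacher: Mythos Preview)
Your reduction for $\lambda\neq 0$ is correct, and in fact more direct than the paper's (you invoke Mosic's g-Drazin Jacobson lemma \cite[Theorem 2.5]{M} straight away, whereas the paper goes through the ordinary spectrum). The gap is exactly where you locate it: at $\lambda=0$ you are left needing $BD\in\mathcal{L}(X)^{d}\Rightarrow AC\in\mathcal{L}(X)^{d}$, and your plan to prove a reverse Cline formula by hand --- with candidate $f=Dg^{2}B$ and a hoped-for reverse of Lemma 2.1 --- is not the right route. The asymmetry of $bdb=bac$, $dbd=acd$ that you diagnose is genuine; no purely ring-theoretic converse of Theorem 2.2 is available, and this paper does not supply one.

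The idea you are missing is Banach-algebra specific and bypasses Cline entirely at $\lambda=0$. In $\mathcal{L}(X)$ one has Koliha's characterization $T\in\mathcal{L}(X)^{d}\iff 0\notin\mathrm{acc}\,\sigma(T)$, i.e.\ $\sigma_d(T)=\mathrm{acc}\,\sigma(T)$. The paper's argument (its Case 2) uses Lemma 4.1 --- the ordinary-invertibility Jacobson lemma, which in \cite{M} is two-sided --- to obtain $\sigma(AC)\setminus\{0\}=\sigma(BD)\setminus\{0\}$; since two subsets of ${\Bbb C}$ differing in at most one point have the same set of accumulation points, this yields $\sigma_d(AC)=\mathrm{acc}\,\sigma(AC)=\mathrm{acc}\,\sigma(BD)=\sigma_d(BD)$ in one stroke. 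Concretely, if $0\in\sigma_d(AC)$ then $0$ is a limit of nonzero $\lambda_n\in\sigma(AC)\setminus\{0\}=\sigma(BD)\setminus\{0\}$, hence $0\in\sigma_d(BD)$, and symmetrically; no reverse Cline formula is needed. Your closedness observation was almost there --- the fix is to look at accumulation points of the \emph{ordinary spectrum} $\sigma(AC)$, not of $\sigma_d(AC)$.
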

\begin{proof} Case 1. $0\in \sigma_d(AC)$. Then $AC\not\in A^{d}$. In view of Theorem 2.2, $BD\in A^{d}$. Thus $0\in \sigma_d(BD)$.

Case 2. $0\not\in \lambda\in\sigma_d(AC)$. Then $\lambda\in acc\sigma(AC)$. Thus, we see that
$$\lambda=\lim\limits_{n\to \infty}\{ \lambda_n ~|~ \lambda_n I-AC\not\in \mathcal{L}(X)^{-1}\}.$$
For $\lambda_n\neq 0$, it follows by Lemma 4.1 that $I-(\frac{1}{\lambda_n} A)C\in \mathcal{L}(X)^{-1}$ if and only if $I-B(\frac{1}{\lambda_n}D)\in \mathcal{L}(X)^{-1}$. Therefore
$$\lambda=\lim\limits_{n\to \infty}\{ \lambda_n ~|~ \lambda_n I-BD\not\in \mathcal{L}(X)^{-1}\}\in acc\sigma(BD)=\sigma_d(BD).$$
Therefore $\sigma_d(AC)\subseteq \sigma_d(BD).$ Likewise, $\sigma_d(BD)\subseteq \sigma_d(AC)$, as required.\end{proof}

\begin{cor} Let $A,B\in \mathcal{L}(X)$ , then $$\sigma_d(AB)=\sigma_d(BA).$$\end{cor}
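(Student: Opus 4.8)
The plan is to derive this as an immediate specialization of Theorem 4.2, exactly paralleling the way Corollaries 2.3 and 3.3 are deduced from Theorems 2.2 and 3.2. The key observation is that the coupling conditions $bdb=bac$ and $dbd=acd$ become automatic when the four operators collapse to just two in the pattern $(a,b,c,d)=(A,B,B,A)$.

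First I would apply Theorem 4.2 with the roles of the theorem's four operators $A,B,C,D$ filled by $A,B,B,A$ respectively; that is, I take the theorem's $C$ to be our $B$ and its $D$ to be our $A$. With this choice the product $AC$ becomes $AB$ and the product $BD$ becomes $BA$, so the conclusion $\sigma_d(AC)=\sigma_d(BD)$ of Theorem 4.2 reads $\sigma_d(AB)=\sigma_d(BA)$, which is precisely the desired equality. Since $A,B\in\mathcal{L}(X)$, the substituted operators again lie in $\mathcal{L}(X)$, so the theorem applies verbatim.

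The only point needing verification is that the hypotheses of Theorem 4.2 hold under this substitution. Writing out $BDB=BAC$ and $DBD=ACD$ with $C=B$ and $D=A$ yields $BAB=BAB$ and $ABA=ABA$, so both relations are trivial identities. There is essentially no obstacle here: the entire content is this degeneration of the condition to an automatic identity, the same reduction already recorded in Corollary 2.3. Hence the corollary follows at once by invoking Theorem 4.2, with no spectral computation required.
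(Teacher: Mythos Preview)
Your proposal is correct and matches the paper's own proof exactly: the paper simply says the result follows by choosing $C=B$ and $D=A$ in Theorem~4.2. Your additional verification that the hypotheses $BDB=BAC$ and $DBD=ACD$ degenerate to the identities $BAB=BAB$ and $ABA=ABA$ under this substitution is precisely the (trivial) point being invoked.
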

\begin{proof} This is obvious by choosing $C=B$ and $D=A$ in Theorem 4.2.\end{proof}

For the Drazin spectrum $\sigma_D(a)$, we now derive

\begin{thm} Let $A,B,C,D\in \mathcal{L}(X)$ such that $BDB=BAC,$ $DBD=ACD$, then $$\sigma_D(AC)=\sigma_D(BD).$$\end{thm}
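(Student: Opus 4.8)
The plan is to establish the two inclusions separately and, exactly as in the proof of Theorem 4.2, to split according to whether $\lambda=0$. Write $u=BD$ and $v=AC$; the hypotheses $BDB=BAC$ and $DBD=ACD$ say precisely that $uB=Bv$ and $vD=Du$, so $B$ and $D$ intertwine $u$ and $v$. For $\lambda=0$ I would simply quote Theorem 3.4: $AC\in\mathcal{L}(X)^{D}$ forces $BD\in\mathcal{L}(X)^{D}$, that is $0\notin\sigma_D(AC)\Rightarrow 0\notin\sigma_D(BD)$.

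For $\lambda\neq 0$ I would first scale: with $a=\lambda^{-1}A$, $b=B$, $c=C$, $d=\lambda^{-1}D$ Mosic's condition again holds, so Lemma 4.1 gives $\lambda I-AC\in\mathcal{L}(X)^{-1}$ if and only if $\lambda I-BD\in\mathcal{L}(X)^{-1}$, whence $\sigma(AC)\setminus\{0\}=\sigma(BD)\setminus\{0\}$, together with the resolvent identity
\[(\lambda I-BD)^{-1}=\tfrac{1}{\lambda}I+\tfrac{1}{\lambda}B(\lambda I-AC)^{-1}D.\]
Passing to power series in the intertwining relations also yields $(\lambda I-AC)^{-1}D=D(\lambda I-BD)^{-1}$ and $(\lambda I-BD)^{-1}B=B(\lambda I-AC)^{-1}$ on the common resolvent set. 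I would then invoke the standard characterization that $\lambda\notin\sigma_D(T)$ iff $\lambda$ lies in the resolvent set or is a pole of finite order of the resolvent, and recall that $\sigma_D(T)\supseteq\sigma_d(T)=\operatorname{acc}\sigma(T)$, the two differing only at isolated points of the spectrum. Since $\sigma_d(AC)=\sigma_d(BD)$ by Theorem 4.2 and the nonzero spectra coincide, the whole question reduces to comparing the pole orders of the two resolvents at each common nonzero isolated point $\lambda_0$.

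The displayed identity settles one direction immediately: at $\lambda_0\neq 0$ the principal part of $(\lambda I-BD)^{-1}$ has order at most that of $(\lambda I-AC)^{-1}$, so $\lambda_0\notin\sigma_D(AC)\Rightarrow\lambda_0\notin\sigma_D(BD)$; combined with the case $\lambda=0$ this gives the inclusion $\sigma_D(BD)\subseteq\sigma_D(AC)$. The reverse inclusion is the step I expect to be the genuine obstacle. Transferring Drazin invertibility back from $BD$ to $AC$ amounts to proving that the spectral nilpotent part $N_{AC}=(AC-\lambda_0)P$ is nilpotent once $N_{BD}=(BD-\lambda_0)Q$ is, where $P,Q$ denote the Riesz projections of $AC,BD$ at $\lambda_0$. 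The intertwining relations only deliver $N_{AC}^{\,k}D=DN_{BD}^{\,k}$ and $BN_{AC}^{\,k}=N_{BD}^{\,k}B$, so $N_{BD}^{\,q}=0$ forces merely $N_{AC}^{\,q}D=0$ and $BN_{AC}^{\,q}=0$; that is, $N_{AC}^{\,q}$ annihilates $\operatorname{ran}D$ and is killed by $B$ on the left, which need not make $N_{AC}$ nilpotent on $\operatorname{ran}P$ when $B$ or $D$ is badly non-invertible. Closing this gap seems to demand either a companion to Theorem 3.4 in the direction $BD\in\mathcal{L}(X)^{D}\Rightarrow AC\in\mathcal{L}(X)^{D}$, or a hypothesis controlling $\ker B$ and $\operatorname{ran}D$; this is where I would concentrate the effort, and where the asserted equality is most at risk.
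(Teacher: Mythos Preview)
Your instinct is exactly right, and the gap you locate is fatal rather than merely technical: the stated equality $\sigma_D(AC)=\sigma_D(BD)$ is false in general. Take $B=D=0$ and let $A,C\in\mathcal{L}(X)$ be arbitrary. Then $BDB=0=BAC$ and $DBD=0=ACD$ hold trivially, $BD=0$ has $\sigma_D(BD)=\emptyset$, while $AC$ is completely unconstrained; choosing $AC$ to be the Volterra operator on $L^{2}[0,1]$ (quasinilpotent but not nilpotent, hence not Drazin invertible) gives $0\in\sigma_D(AC)$. Thus only the inclusion $\sigma_D(BD)\subseteq\sigma_D(AC)$ can survive, and your argument for that inclusion is essentially correct. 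One small repair: Lemma~4.1 is itself one-directional, so from it you obtain only $\sigma(BD)\setminus\{0\}\subseteq\sigma(AC)\setminus\{0\}$, not the equality you wrote---but inclusion is all your pole-order comparison via the resolvent identity actually needs.

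The paper's own proof is the single sentence ``In view of Theorem 3.4, $AC\in\mathcal{L}(X)^{D}$ if and only if $BD\in\mathcal{L}(X)^{D}$, and therefore we complete the proof by~\cite[Theorem 3.1]{Y}.'' This invokes Theorem~3.4 as a biconditional, but Theorem~3.4---like every result in Sections~2 and~3---establishes only the forward implication $ac\in R^{D}\Rightarrow bd\in R^{D}$. The Mosi\'c conditions $bdb=bac$, $dbd=acd$ are genuinely asymmetric in the roles of $(a,c)$ and $(b,d)$, the reverse implication fails by the counterexample above, and no external citation can rescue a false statement. So the obstruction you isolate---transferring Drazin invertibility from $BD$ back to $AC$---is not a difficulty to be overcome but precisely the point at which the asserted equality breaks.
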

\begin{proof} In view of Theorem 3.4, $AC\in \mathcal{L}(X)^{D}$ if and only if $BD\in \mathcal{L}(X)^{D}$, and therefore we complete the proof
by~\cite[Theorem 3.1]{Y}.\end{proof}

A bounded linear operator $T\in \mathcal{L}(X)$ is Fredholm operator if $dimN(T)$ and $codimR(T)$ are finite, where $N(T)$ and $R(T)$ are the null space and the range of $T$ respectively. If furthermore the Fredholm index $ind(T)=0$, then $T$ is said to be Weyl operator. For each nonnegative integer $n$ define $T_{|n|}$ to be the restriction of $T$ to $R(T^n)$. If for some $n$, $R(T^n)$ is closed and $T_{|n|}$ is a Fredholm operator then $T$ is called a $B$-Fredholm operator. $T$ is said to be a $B$-Weyl operator if $T_{|n|}$ is a Fredholm operator of index zero (see~\cite{Ba}). The $B$-Fredholm and $B$-Weyl spectrums of $T$ are defined by
$$\begin{array}{c}
\sigma_{BF}(T)=\{ \lambda\in {\Bbb C}~|~T-\lambda I~\mbox{is not a}$ $B-\mbox{Fredholm operator}\};\\
\sigma_{BW}(T)=\{ \lambda\in {\Bbb C}~|~T-\lambda I~\mbox{is not a}$ $B-\mbox{Weyl operator}\}.
\end{array}$$

\begin{cor} Let $A,B,C,D\in \mathcal{L}(X)$ such that $BDB=BAC,$ $DBD=ACD$, then $$\sigma_{BF}(AC)=\sigma_{BF}(BD).$$\end{cor}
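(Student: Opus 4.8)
The plan is to transport the statement into the Calkin algebra, where the $B$-Fredholm spectrum becomes an ordinary Drazin spectrum. Write $\mathcal{C}(X)=\mathcal{L}(X)/\mathcal{K}(X)$ for the Calkin algebra and $\pi:\mathcal{L}(X)\to \mathcal{C}(X)$ for the canonical projection. By Berkani's characterization of $B$-Fredholm operators (see \cite{Ba}), an operator $T\in \mathcal{L}(X)$ is $B$-Fredholm if and only if $\pi(T)$ is Drazin invertible in $\mathcal{C}(X)$, so that
$$\sigma_{BF}(T)=\sigma_D\big(\pi(T)\big),$$
the Drazin spectrum on the right being computed in the Banach algebra $\mathcal{C}(X)$. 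Thus the corollary reduces to the equality $\sigma_D(\pi(AC))=\sigma_D(\pi(BD))$ inside $\mathcal{C}(X)$.

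First I would observe that Mosic's relations survive the quotient. Since $\pi$ is a unital algebra homomorphism, applying it to $BDB=BAC$ and $DBD=ACD$ yields
$$\pi(B)\pi(D)\pi(B)=\pi(B)\pi(A)\pi(C),\qquad \pi(D)\pi(B)\pi(D)=\pi(A)\pi(C)\pi(D),$$
so the four elements $\pi(A),\pi(B),\pi(C),\pi(D)\in \mathcal{C}(X)$ again satisfy $bdb=bac$ and $dbd=acd$, with $\pi(A)\pi(C)=\pi(AC)$ and $\pi(B)\pi(D)=\pi(BD)$. Hence I am in exactly the setting of Theorem 4.3, the only change being that the ambient algebra is now a general unital Banach algebra rather than $\mathcal{L}(X)$.

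It then remains to run the Drazin spectrum equality inside $\mathcal{C}(X)$. The point $\lambda=0$ needs no adaptation, since Theorem 3.4 is purely ring-theoretic and gives $\pi(AC)\in \mathcal{C}(X)^D$ if and only if $\pi(BD)\in \mathcal{C}(X)^D$. For $\lambda\neq 0$ I would rescale, replacing $(\pi(A),\pi(D))$ by $(\tfrac{1}{\lambda}\pi(A),\tfrac{1}{\lambda}\pi(D))$ — a substitution that preserves both relations, exactly as in the proof of Theorem 4.2 — and apply Lemma 4.1 (valid in any ring) to conclude that $\lambda-\pi(AC)$ is invertible precisely when $\lambda-\pi(BD)$ is; this matches the punctured ordinary spectra, $\sigma(\pi(AC))\setminus\{0\}=\sigma(\pi(BD))\setminus\{0\}$, which is exactly how Theorem 4.2 is obtained for the g-Drazin spectrum. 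What remains is to promote this agreement of ordinary spectra, together with the zero-point Drazin equivalence, to an agreement of the Drazin spectra, i.e.\ to track the pole orders.

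The hard part will be precisely this promotion in the abstract algebra $\mathcal{C}(X)$. In $\mathcal{L}(X)$ the proof of Theorem 4.3 dispatches it by quoting \cite[Theorem 3.1]{Y}, a statement phrased for bounded operators, whereas $\mathcal{C}(X)$ is not of the form $\mathcal{L}(Y)$; so the genuine work is to confirm that the passage ``punctured spectra agree and the $0$-point Drazin equivalence holds $\Rightarrow$ full Drazin spectra agree'' is Banach-algebraic rather than operator-specific. I expect this to go through, because its two supports — the description of the Drazin spectrum of an element of a Banach algebra through the poles of its ordinary spectrum, and the Cline-type identity $(\pi(BD))^D=\pi(B)\big((\pi(AC))^D\big)^2\pi(D)$ together with the index estimate $i(bd)\le i(ac)+1$ of Theorem 3.4 (which bounds pole orders) — are both available in $\mathcal{C}(X)$. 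Once the Drazin spectra coincide in the Calkin algebra, the reduction of the first paragraph delivers $\sigma_{BF}(AC)=\sigma_{BF}(BD)$.
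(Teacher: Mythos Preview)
Your plan coincides with the paper's: pass to a quotient of $\mathcal{L}(X)$, use Berkani's characterization so that $\sigma_{BF}(T)=\sigma_D(\pi(T))$, push the Mosic relations through $\pi$, and then argue that the Drazin spectra of $\pi(AC)$ and $\pi(BD)$ agree. Two differences deserve comment.

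First, the paper works in $\mathcal{L}(X)/F(X)$ with $F(X)$ the ideal of finite-rank operators, not in the Calkin algebra. Berkani's theorem is stated for the finite-rank quotient, so your Calkin formulation would need a separate justification; it is simpler just to use $F(X)$ as the paper does.

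Second, the paper does not carry out your punctured-spectrum and pole-order analysis for $\lambda\neq 0$ at all: after noting that the relations descend, it simply asserts that ``according to Theorem~3.4, for every scalar $\lambda$, $\lambda I-\pi(AC)$ has Drazin inverse $\Longrightarrow \lambda I-\pi(BD)$ has Drazin inverse.'' Taken literally this is the same gap you worried about (Theorem~3.4 as stated only treats $\lambda=0$), so your caution is justified. But the repair is shorter and purely ring-theoretic, so the concern about \cite[Theorem~3.1]{Y} being operator-specific evaporates: Mosic's Jacobson lemma for the Drazin inverse in \cite{M} gives $1-ac\in R^{D}\Leftrightarrow 1-bd\in R^{D}$ in any ring, and the rescaling $(a,d)\mapsto(\lambda^{-1}a,\lambda^{-1}d)$ you already noted handles every $\lambda\neq 0$. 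That one-line citation replaces the whole Banach-algebraic promotion you sketched in your last paragraph. (Incidentally, the result you call ``Theorem~4.3'' is Theorem~4.4 in the paper's numbering.)
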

\begin{proof} Let $\pi: \mathcal{L}(X)\to \mathcal{L}(X)/F(X)$ be the canonical map and $F(X)$ be the ideal of finite rank operators in $\mathcal{L}(X)$. As in well known, $T\in \mathcal{L}(X)$ is $B$-Fredholm if and only if $\pi(T)$ has Drazin inverse. By hypothesis, we see that
$$\begin{array}{c}
\pi(B)\pi(D)\pi(B)=\pi(B)\pi(A)\pi(C),\\
\pi(D)\pi(B)\pi(D)=\pi(A)\pi(C)\pi(D).
\end{array}$$
According to Theorem 3.4, for every scalar $\lambda$, we have
$$\lambda I-\pi(AC)~\mbox{has Drazin inverse} ~\Longrightarrow \lambda I-\pi(BD)~\mbox{has Drazin inverse}.$$ This completes the proof.\end{proof}

\begin{cor} Let $A,B,C,D\in \mathcal{L}(X)$ such that $BDB=BAC,$ $DBD=ACD$, then $$\sigma_{BW}(AC)=\sigma_{BW}(BD).$$\end{cor}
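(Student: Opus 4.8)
The plan is to deduce the $B$-Weyl equality from the $B$-Fredholm equality already proved in Corollary 4.5, augmented by an equality of ($B$-Fredholm) indices. Recall that $T$ is a $B$-Weyl operator exactly when it is $B$-Fredholm with extended index $0$, so that $\sigma_{BW}(T)=\sigma_{BF}(T)\cup\{\lambda\notin\sigma_{BF}(T)~|~\mathrm{ind}(\lambda I-T)\neq 0\}$. By Corollary 4.5 the two $B$-Fredholm spectra coincide; write $\Sigma:=\sigma_{BF}(AC)=\sigma_{BF}(BD)$. Thus on ${\Bbb C}\setminus\Sigma$ both $\lambda I-AC$ and $\lambda I-BD$ are $B$-Fredholm, and it suffices to show $\mathrm{ind}(\lambda I-AC)=\mathrm{ind}(\lambda I-BD)$ for every $\lambda\notin\Sigma$; the two sets on which the index is nonzero will then agree, giving $\sigma_{BW}(AC)=\sigma_{BW}(BD)$.

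The key observation is that Mosic's condition encodes intertwining relations: $BDB=BAC$ gives $(BD)B=B(AC)$ and $DBD=ACD$ gives $(AC)D=D(BD)$, so $B$ and $D$ intertwine $AC$ and $BD$ in both directions. The plan is to use these to prove an index-preserving refinement of the generalized Jacobson lemma (Lemma 4.1), namely that for $\lambda\neq 0$, if $\lambda I-AC$ is Fredholm then $\lambda I-BD$ is Fredholm with $\mathrm{ind}(\lambda I-BD)=\mathrm{ind}(\lambda I-AC)$. This would be established by tracking kernels and cokernels through the maps $B$ and $D$, in the spirit of the classical identity $\mathrm{ind}(\lambda I-UV)=\mathrm{ind}(\lambda I-VU)$ for $\lambda\neq 0$, and then lifting the conclusion to the $B$-Fredholm index by passing to the restrictions $T_{|n|}$ on $R(T^n)$. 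To pin down the single point $\lambda=0$ (when $0\notin\Sigma$) I would anchor the index at infinity — for $|\lambda|>\max(\|AC\|,\|BD\|)$ both operators are invertible, hence of index $0$ — and use that the $B$-Fredholm index is locally constant on the components of ${\Bbb C}\setminus\Sigma$ (see~\cite{Ba}), propagating the equality to the component containing $0$.

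I expect the index-preserving Jacobson lemma to be the main obstacle. The quotient-algebra argument behind Corollary 4.5 transfers only $B$-Fredholmness — that is, Drazin invertibility of the images of $\lambda I-AC$ and $\lambda I-BD$ in the Calkin-type quotient $\mathcal{L}(X)/F(X)$ — and is by design blind to the index, which is precisely the datum distinguishing $B$-Weyl from $B$-Fredholm operators. Hence the intertwiners $B,D$ must be exploited directly at the level of null spaces, ranges and the restrictions $T_{|n|}$, rather than modulo finite-rank perturbations, and the reconciliation of the pointwise index identity (valid for $\lambda\neq 0$) with the behaviour at $\lambda=0$ on a possibly bounded component of ${\Bbb C}\setminus\Sigma$ is the delicate point. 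Once the index equality holds throughout ${\Bbb C}\setminus\Sigma$, the $B$-Weyl spectra coincide and the corollary follows.
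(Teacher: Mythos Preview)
Your strategy—start from the $B$-Fredholm equality of Corollary~4.5 and then match indices on the common complement ${\Bbb C}\setminus\Sigma$—is the paper's as well. The execution differs. Instead of analysing the restrictions $T_{|n|}$ directly and handling $\lambda=0$ by a separate connectedness argument, the paper invokes the punctured-neighbourhood theorem for $B$-Fredholm operators: if $T$ is $B$-Fredholm then for every sufficiently small $\mu\neq 0$ the operator $T-\mu I$ is ordinary Fredholm with $\mathrm{ind}(T-\mu I)$ equal to the $B$-Fredholm index of $T$. Applied at an arbitrary $\lambda\notin\Sigma$ (including $\lambda=0$), this reduces the $B$-Fredholm index comparison to an ordinary Fredholm index comparison at a nearby point $\lambda'=\lambda+\mu$, which can always be chosen nonzero; the equality $\mathrm{ind}(\lambda' I-AC)=\mathrm{ind}(\lambda' I-BD)$ for $\lambda'\neq 0$ is then obtained by adapting the argument of \cite[Lemmas~2.3--2.4]{Y} to the hypotheses $BDB=BAC$, $DBD=ACD$. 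This single perturbation device handles all $\lambda$ uniformly, so your anchoring-at-infinity manoeuvre and the attendant bounded-component worry evaporate; it also avoids having to check that the intertwiners $B,D$ restrict compatibly to the iterated ranges $R((\lambda I-AC)^n)$ and $R((\lambda I-BD)^n)$, which is the hidden cost of your ``passing to restrictions'' step and is not obviously easier than the perturbation route.
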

\begin{proof} If $T$ is $B$-Fredholm then for $\lambda\neq 0$ small enough, $T-\lambda I$ is Fredholm and $ind(T)=ind(T-\lambda I)$.
As in the proof of \cite[Lemma 2.3, Lemma 2.4]{Y}, we see that $I-AC$ is Fredholm if and only if $I-BD$ is Fredholm and in this case, $ind(I-AC)=ind(I-BD)$. Therefore we complete the proof by Theorem 4.4.\end{proof}

An element $a\in \mathcal{L}(X)$ is algebraic if there exists a non-zero complex polynomial $p$ such that $p(a)=0$. As an immediate consequence of Theorem 4.4, we have

\begin{cor} Let $A,B,C,D\in \mathcal{L}(X)$ such that $BDB=BAC,$ $DBD=ACD$, then $AC$ is algebraic if and only if $BD$ is algebraic.\end{cor}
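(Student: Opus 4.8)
The plan is to deduce the result from Theorem 4.4 via the standard description of the algebraic property through the Drazin spectrum: an operator $T\in\mathcal{L}(X)$ is algebraic if and only if $\sigma_D(T)=\emptyset$. Once this is granted, the corollary is immediate, since Theorem 4.4 yields $\sigma_D(AC)=\sigma_D(BD)$, so one of these sets is empty exactly when the other is, and hence $AC$ is algebraic if and only if $BD$ is algebraic.

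It therefore remains to record why an operator is algebraic precisely when its Drazin spectrum is empty. For the forward direction, suppose $p(T)=0$ for some nonzero polynomial $p(z)=c\prod_{i=1}^{k}(z-\lambda_i)^{m_i}$. Then $\sigma(T)\subseteq\{\lambda_1,\dots,\lambda_k\}$ is finite and each $\lambda_i$ is a pole of the resolvent of $T$. For any $\lambda\in{\Bbb C}$, the operator $\lambda-T$ is then either invertible (when $\lambda$ lies in the resolvent set) or has $\lambda$ as a pole of its resolvent, and in either case $\lambda-T$ is Drazin invertible; thus $\sigma_D(T)=\emptyset$.

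For the converse, assume $\sigma_D(T)=\emptyset$. Recalling that a bounded operator is Drazin invertible if and only if the relevant spectral point is either in the resolvent set or a pole of the resolvent, every point of $\sigma(T)$ must be such a pole and hence an isolated point of $\sigma(T)$. A compact subset of ${\Bbb C}$ whose points are all isolated is finite, so $\sigma(T)=\{\lambda_1,\dots,\lambda_k\}$ with each $\lambda_i$ a pole of finite order $m_i$. The Riesz decomposition associated with these isolated spectral points splits $X$ into the corresponding spectral subspaces on which $(T-\lambda_i)^{m_i}$ vanishes, so $\prod_{i=1}^{k}(T-\lambda_i)^{m_i}=0$ and $T$ is algebraic.

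The main obstacle is this converse implication: the work lies in passing from ``every point of the spectrum is a pole'' to finiteness of the spectrum, and then to an explicit annihilating polynomial via the Riesz spectral decomposition. With the equivalence between the algebraic property and emptiness of the Drazin spectrum established, Theorem 4.4 completes the argument without any further computation.
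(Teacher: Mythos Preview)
Your proposal is correct and follows essentially the same approach as the paper's own proof, which simply invokes \cite[Theorem 2.1]{Bo} for the equivalence ``$T$ is algebraic $\Leftrightarrow \sigma_D(T)=\emptyset$'' and then applies Theorem 4.4. You have supplied a self-contained justification of that equivalence rather than citing it, but the overall strategy is identical.
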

\begin{proof} It follows immediately from ~\cite[Theorem 2.1]{Bo} and Theorem 4.4.\end{proof}

\vskip10mm

\end{document}